\documentclass[a4paper,11pt]{article}
\usepackage{amsmath,amssymb,enumerate}
\usepackage{amsthm,color}

\usepackage{hyperref}
\hypersetup{
setpagesize=false,
 bookmarksnumbered=true,%
 bookmarksopen=true,%
 colorlinks=true,%
 linkcolor=blue,
 citecolor=red
}


\newcommand{\R}{\mathbb{R}}

\newcommand{\N}{\mathbb{N}}
\newcommand{\ep}{\varepsilon}
\newcommand{\pa}{\partial}

\DeclareMathOperator{\supp}{supp}

\setlength{\topmargin}{0mm}
\setlength{\oddsidemargin}{0mm}
\setlength{\evensidemargin}{0mm}
\setlength{\textwidth}{160mm}
\setlength{\textheight}{220mm}

\newtheorem{theorem}{Theorem}[section]
\newtheorem{lemma}[theorem]{Lemma}

\theoremstyle{remark}
\newtheorem{remark}{Remark}[section]
\theoremstyle{definition}

\newtheorem{definition}{Definition}[section]

\numberwithin{equation}{section}

\makeatletter
\def\@cite#1#2{[{{\bfseries #1}\if@tempswa , #2\fi}]}
\makeatother                  
\begin{document}
\begin{center}
\Large{{\bf
Finite time blowup of solutions to semilinear wave 
\\
equation in an exterior domain
}}
\end{center}

\vspace{5pt}

\begin{center}
Motohiro Sobajima%
\footnote{
Department of Mathematics, 
Faculty of Science and Technology, Tokyo University of Science,  
2641 Yamazaki, Noda-shi, Chiba, 278-8510, Japan,  
E-mail:\ {\tt msobajima1984@gmail.com}}
\footnote{Partially supported 
by Grant-in-Aid for Young Scientists Research 
No.18K13445. }
and
Kyouhei Wakasa%
\footnote{
Department of Mathematics, 
Faculty of Science and Technology, Tokyo University of Science,  
2641 Yamazaki, Noda-shi, Chiba, 278-8510, Japan,  
E-mail:\ {\tt wakasa\_kyouhei@ma.noda.tus.ac.jp}}
\end{center}

\newenvironment{summary}{\vspace{.5\baselineskip}\begin{list}{}{%
     \setlength{\baselineskip}{0.85\baselineskip}
     \setlength{\topsep}{0pt}
     \setlength{\leftmargin}{12mm}
     \setlength{\rightmargin}{12mm}
     \setlength{\listparindent}{0mm}
     \setlength{\itemindent}{\listparindent}
     \setlength{\parsep}{0pt}
     \item\relax}}{\end{list}\vspace{.5\baselineskip}}
\begin{summary}
{\footnotesize {\bf Abstract.}
We consider the initial-boundary value problem 
of semilinear wave equation with nonlinearity $|u|^p$ 
in exterior domain in $\R^N$ $(N\geq 3)$. 
Especially, the lifespan of 
blowup solutions with small initial data 
are studied. 
The result gives upper bounds of lifespan 
which is essentially the same as the Cauchy problem in $\R^N$.
At least in the case $N=4$, their estimates are sharp in view of 
the work by Zha--Zhou \cite{ZZ15}. 
The idea of the proof is to use special solutions 
to linear wave equation with Dirichlet boundary condition
which are constructed via an argument based on Wakasa--Yordanov \cite{YWpre}.
}
\end{summary}

{\footnotesize{\it Mathematics Subject Classification}\/ (2010): %
Primary: 
35L05, 35L20, 35B44.
}

{\footnotesize{\it Key words and phrases}\/: %
Semilinear wave equations, 
blowup, upper bound of lifespan,
exterior domain
}

\section{Introduction}\label{intro}
In this paper we consider the semilinear wave equations 
in an exterior domain in $\R^N$ $(N\geq 3)$:
\begin{equation}
\label{NW}
\begin{cases}
\pa_t^2u(x,t) -\Delta u(x,t)=|u(x,t)|^{p}, 
&(x,t)\in \Omega\times (0,T),
\\
u(x,t)=0
&(x,t)\in \pa\Omega\times (0,T),
\\
u (x,0)=\ep f(x)
&x\in \Omega, 
\\
\pa_tu (x,0)=\ep g(x)
&x\in \Omega,
\end{cases}
\end{equation}
where $\pa_t=\pa/\pa t$, 
$\Delta=\sum_{j=1}^N\pa^2/\pa x_j^2$, $1<p\leq \frac{N}{N-2}$, $T>0$
and $\Omega\subset \R^N$ satisfies that 
$\R^N\setminus \Omega$ is bounded and $\pa\Omega$ is a smooth boundary. 
The pair $(f,g)$ is given (the shape of) initial data 
and the parameter $\ep>0$ describes the size (smallness) of the initial data. 
The interest of the present paper is to study the profile of solutions to
\eqref{NW} with sufficiently small initial data.
Here the pair $(f,g)$ satisfies
\begin{align}\label{ass.g}
(f,g)\in C_0^\infty(\Omega), \quad \supp (f,g)\subset \overline{B(0,r_0)},
\end{align}
where $B(0,r)=\{x\in \R^N\;;|x|<r\}$.

This kind of study of global existence and blowup of solutions to 
\eqref{NW} has been discussed 
since the pioneering work of John \cite{John79} when 
$\Omega=\R^N$ with $N=3$:
\begin{equation}
\label{NWcauchy}
\begin{cases}
\pa_t^2u(x,t) -\Delta u(x,t)=|u(x,t)|^{p}, 
&(x,t)\in \R^N\times (0,T),
\\
u (x,0)=\ep f(x)
&x\in \R^N, 
\\
\pa_tu (x,0)=\ep g(x)
&x\in \R^N. 
\end{cases}
\end{equation}
It is shown in \cite{John79}  that 
the critical exponent of \eqref{NWcauchy} is determined as $p=1+\sqrt{2}$, that is, 
\begin{itemize}
\item if $1<p<1+\sqrt{2}$, then 
the solution of \eqref{NWcauchy} blows up in finite time 
for ``positive'' initial data;
\item if $p>1+\sqrt{2}$, then 
there exists a global solution with small initial data. 
\end{itemize}
After that, Strauss \cite{Strauss81} conjectured that 
the critical exponent of \eqref{NWcauchy} for general dimension $N$ is 
given by  
\begin{align*}
p_S(N)=\sup\{p>1\;;\;\gamma(N,p)>0\}, \quad \gamma(N,p)=2+(N+1)p-(N-1)p^2.
\end{align*}
Now $p_S(N)$ is called the Strauss exponent. 
Including blowup phenomena in the critical situation, 
this conjecture was solved until the works Yordanov--Zhang \cite{YZ06} and Zhou \cite{Zhou07}. 
The further study for blowup solutions can be found in the literature. 
Especially, the behavior of lifespan (maximal existence time) of 
blowup solutions with small initial data is intensively discussed (see e.g., 
Lindblad \cite{Lindblad90}, Takamura--Wakasa \cite{TW11}, 
Zhou--Han \cite{ZH14}, Ikeda--Sobajima--Wakasa \cite{IkSoWa_pre} and the references therein).
Here the definition of lifespan is given as follows:
\begin{equation*}
T_\ep:=T(\ep f,\ep g)
=\sup\{T>0\;;\;\text{there exists a
unique weak solution $u$ of \eqref{NW} in $(0,T)$}\}.
\end{equation*}
The precise behavior of 
lifespan of small solutions is given by  
\[
T_\ep\approx
\begin{cases}
C\ep^{-\frac{2p(p-1)}{\gamma(N,p)}}
&
{\rm if\ }1<p<p_S(N),
\\
\exp(C\ep^{-p(p-1)})
&
{\rm if\ }p=p_S(N)
\end{cases}
\]
when $\ep>0$ is sufficiently small.

Of course, there are many investigations dealing with 
the exterior problem \eqref{NW} of semilinear wave equations. 
The significant difference to the initial value problem 
is the effect of reflection at the boundary 
and the lack of symmetry such as scale-invariance, 
rotation-invariance and so on. 
For the existence of global-in-time solutions 
to \eqref{NW} has been discussed in 
Du--Metcalfe--Sogge--Zhou \cite{DMSZ08} 
and
Hidano--Metcalfe--Smith--Sogge--Zhou \cite{HMSSZ10} 
when $p_S(N)<p<\frac{N+3}{N-1}$ and $N=3,4$. 
The sharp lower bounds for lifespan of solutions are shown in Yu \cite{Yu11} 
$2<p<p_S(3)$ with $N=3$; 
Zhou--Han \cite{ZH11} proved sharp upper bounds in the case  
$1<p<p_S(N)$ and $N\geq 3$. 
For the critical case $p=p_S(N)$, 
Zha--Zhou \cite{ZZ15} discussed 
the case $N=4$ and $p=p_S(4)=2$
and proved the lower bound 
$T_\ep \geq \exp(C\ep^{-2})$ 
which seems sharp from the lifespan 
estimate for the Cauchy problem.
The upper bounds for the critical cases 
are shown in 
Lai--Zhou \cite{LZ15} for $N=3$ 
and 
Lai--Zhou \cite{LZ16} for $N\geq 5$. 
We should point out that 
in the two dimensional case 
there are some blowup results 
for small initial data 
(see Li--Wang \cite{LW12} and Lai--Zhou \cite{LZ18}),
however, 
precise estimates for lifespan 
are not treated so far.


As far as the author's knowledge, 
sharp upper bound of lifespan for two and four dimensional
cases are unknown. Moreover, 
the proofs of the blowup in previous works 
(including studies of \eqref{NWcauchy})
depend on the positivity of initial data, 
especially in the higher dimensional case $N\geq 4$. 
In contrast, 
such a restriction in the whole space case is recently removed 
by using a framework of test function methods 
in Ikeda--Sobajima--Wakasa \cite{IkSoWa_pre}.

The purpose of the present paper is to 
prove blowup of solutions to \eqref{NW} 
with sharp upper bound of lifespan when $\Omega_0=\R^N\setminus \overline{B(0,1)}$
without positivity assumption in the pointwise sense as in \cite{IkSoWa_pre}.

The following is the main result of the present paper. 
\begin{theorem}\label{main}
Let $N\geq 3$, 
$\Omega=\Omega_0(=\R^N\setminus \overline{B(0,1)})$ 
and 
 $U(x)=1-|x|^{2-N}$.
Let the pair $(f,g)$ satisfy \eqref{ass.g} with
\begin{equation}\label{eq:ass.pos}
\int_{\Omega_0}g(x)U(x)\,dx>0.
\end{equation} 
If $1<p\leq p_S(N)$, then $T_\ep<\infty$ for every $\ep>0$ with the following upper bounds:
there exists a constant $\ep_0>0$ (independent of $\ep$) such that 
for every $\ep\in (0,\ep_0]$, 
\[
T_\ep\leq 
\begin{cases}
C\ep^{-\frac{2p(p-1)}{\gamma(N,p)}}
&
{\rm if\ }1<p<p_S(N),
\\
\exp(C\ep^{-p(p-1)})
&
{\rm if\ }p=p_S(N).
\end{cases}
\]
\end{theorem}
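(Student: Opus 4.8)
The plan is to adapt the test-function method of Ikeda--Sobajima--Wakasa \cite{IkSoWa_pre} to the exterior domain $\Omega_0$, where the main new ingredient is the construction of a suitable family of special solutions to the \emph{linear} Dirichlet-boundary-value problem that respect the geometry of $\Omega_0$ and decay like the Strauss test function in the interior. The role of the harmonic profile $U(x)=1-|x|^{2-N}$ (which vanishes on $\partial\Omega_0=\partial B(0,1)$ and tends to $1$ at infinity) is to encode the Dirichlet condition, so that integrating the equation against a test function built from $U$ and a self-similar time profile produces no boundary contribution. Concretely, I would first fix a weak formulation of \eqref{NW}: multiply by a test function $\psi(x,t)$ supported appropriately, integrate by parts twice in $x$ and $t$, and verify that if $\psi=0$ on $\partial\Omega_0$ (which is guaranteed by the factor $U$) then the boundary terms arising from Green's identity vanish. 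This reduces the blowup argument to a differential or integral inequality for a functional of $u$.

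Next I would introduce the key functional
\begin{equation*}
F(t)=\int_{\Omega_0}u(x,t)\,\Phi(x)\,dx,
\end{equation*}
where $\Phi$ is built from $U$ (for instance $\Phi=U$ or a truncated variant), and derive from the weak formulation a second-order differential inequality of the form $F''(t)\gtrsim \|u(\cdot,t)\|$-type lower bound that, after applying H\"older's inequality with the finite-measure cutoff, closes into $F''(t)\gtrsim t^{-\alpha}|F(t)|^{p}$ for suitable $\alpha=\alpha(N,p)$. The positivity hypothesis \eqref{eq:ass.pos}, namely $\int_{\Omega_0} g\,U\,dx>0$, guarantees that the initial data give $F(0)\geq 0$ and $F'(0)\geq c\,\ep>0$, which is exactly the ingredient needed to launch the standard Kato-type ODE blowup lemma. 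The role of the harmonic weight $U$ here replaces the pointwise positivity of $(f,g)$ used in earlier exterior-domain works: one only needs the single averaged positivity condition, mirroring the whole-space treatment in \cite{IkSoWa_pre}.

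To obtain the \emph{sharp} lifespan exponents, rather than mere finiteness, I would follow the Wakasa--Yordanov strategy \cite{YWpre} alluded to in the abstract: instead of a single self-similar test function, use a one-parameter family $\psi_\lambda(x,t)=e^{-\lambda t}\Psi_\lambda(x)$ obtained by solving the elliptic problem $\Delta\Psi_\lambda=\lambda^2\Psi_\lambda$ on $\Omega_0$ with the Dirichlet condition $\Psi_\lambda=0$ on $\partial\Omega_0$ and the correct growth at infinity. Testing \eqref{NW} against these exponentially weighted solutions and optimizing over $\lambda$, together with the finite propagation speed (which confines $\supp u(\cdot,t)$ to $\{|x|\le r_0+t\}$ and lets me control the measure of the integration region by a power of $t$), yields the improved integral inequality whose iteration produces the critical/subcritical dichotomy. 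The polynomial lifespan $\ep^{-2p(p-1)/\gamma(N,p)}$ for $p<p_S(N)$ and the exponential lifespan $\exp(C\ep^{-p(p-1)})$ at $p=p_S(N)$ then emerge, as in the Cauchy case, from the slack function $\gamma(N,p)$ and its vanishing at the Strauss exponent.

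The main obstacle, I expect, is the construction and sharp decay analysis of the linear Dirichlet solutions $\Psi_\lambda$ on the exterior domain: in the whole space one has explicit modified-Bessel or Gaussian-type profiles with known asymptotics, but on $\Omega_0$ the presence of $\partial B(0,1)$ forces a correction term (to satisfy $\Psi_\lambda|_{\partial\Omega_0}=0$) whose effect on the key spatial integrals $\int_{\Omega_0}\Psi_\lambda^{p'}$ must be shown not to degrade the exponents. Controlling these weighted integrals uniformly in $\lambda$, and checking that the reflected/corrector part is subordinate to the principal Strauss-type profile for large $|x|$, is precisely where the geometry of the exterior domain enters and where the estimate could in principle lose sharpness; handling it via the explicit radial structure of $\Omega_0$ (using the harmonic function $U$ as the natural building block) is, I believe, the crux of the argument.
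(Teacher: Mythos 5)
Your proposal correctly identifies the main ingredients that the paper uses: the harmonic weight $U$ encoding the Dirichlet condition, the family $\varphi_\lambda$ solving $\lambda^2\varphi_\lambda-\Delta\varphi_\lambda=0$ with $\varphi_\lambda|_{\pa\Omega_0}=0$, the Wakasa--Yordanov construction \cite{YWpre}, and finite propagation speed to control the measure of the integration region. However, the engine you propose is different from the paper's and, as written, has two genuine gaps. First, the critical case $p=p_S(N)$ --- which is the actual new content of the theorem, e.g.\ for $N=4$ --- is not addressed beyond the phrase ``iteration produces the critical/subcritical dichotomy.'' A Kato-type inequality $F''\gtrsim t^{-\alpha}|F|^p$ cannot produce the bound $T_\ep\leq\exp(C\ep^{-p(p-1)})$; at $p=p_S(N)$ the exponents exactly balance and one needs a mechanism that extracts a logarithmic gain. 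The paper gets this by superposing the $\varphi_\lambda$ \emph{in} $\lambda$ (not optimizing over $\lambda$, which would only give exponentially decaying test functions) to build solutions $\Phi_\beta\sim U(x)t^{-\beta}$ of arbitrary polynomial decay, choosing the specific exponent $\beta_p=\tfrac{N-1}{2}-\tfrac1p$, proving a H\"older estimate with a $(\log R)^{1/p'}$ loss, and then closing with a separate nonlinear ODE lemma for the averaged functional $Y(R)=\int_0^R\bigl(\iint|u|^p(\eta_\rho^*)^{2p'}\widetilde{\Phi}_\beta\,dx\,dt\bigr)\rho^{-1}d\rho$ as in \cite{IkSoWa_pre}. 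None of this machinery is present or replaceable by your second-order ODE argument.

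Second, launching the argument from the sole hypothesis \eqref{eq:ass.pos} is more delicate than you suggest. In the classical two-functional scheme the lower bound on the forcing term $\int_{\Omega}|u|^p\,dx$ comes from a functional weighted by the exponentially growing/decaying eigenfunction (here $e^{-t}\varphi_1$), and its positivity requires sign conditions of $(f,g)$ paired against \emph{that} weight, not against $U$; the condition $\int_{\Omega_0}gU\,dx>0$ does not by itself give $\int g\varphi_1\,dx>0$. The paper resolves this by the limit $t_0^{\beta}\Phi_\beta(x,t_0)\to U(x)$ (Lemma~\ref{lem:Phi_beta}(v)): shifting time by a large $t_\beta$ converts the $U$-positivity into positivity of the initial functional $I_\beta$ associated with the translated test function $\widetilde{\Phi}_\beta$. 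If you want to keep your route, note that the subcritical statement under pointwise positivity is essentially Zhou--Han \cite{ZH11}; to match the theorem as stated you must both (a) replace the positivity input by the time-translation trick above (or an equivalent device), and (b) supply an entirely separate argument for $p=p_S(N)$.
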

In the proof of Theorem \ref{main}, 
the positive harmonic function $U$ 
satisfying boundary condition
\begin{align*}
\begin{cases}
\Delta U=0
 & \text{in}\ \Omega_0
\\
U=0
 & \text{on}\ \pa\Omega_0
\\
U>0
 & \text{in}\ \Omega_0
\end{cases}
\end{align*} 
plays an important role as in the previous works concerning 
upper bounds for lifespan for exterior problem. 
However, the strategy of the proof 
in the present paper is  quite different from 
those. 
Our technique is based on 
the test function method for wave equations 
developed in Ikeda--Sobajima--Wakasa \cite{IkSoWa_pre}. 
This argument requires the special 
solutions of corresponding linear wave equation 
having slowly decaying property. 
To construct this kind of solution, 
we used the construction by Wakasa--Yordanov \cite{YWpre}.

\begin{remark}
Comparing the previous results for 
upper bounds for lifespan,  
we do not assume positivity of 
initial data in the pointwise sense. 
Moreover, our assumption \eqref{eq:ass.pos} means 
the quantity $\int_\Omega \pa_t uU\,dx$
is always positive
\[
\frac{d}{dt}
\int_\Omega \pa_t u U\,dx
=
\int_\Omega (\Delta u + |u|^p)U\,dx\geq 0.
\]
This may be meaningful, 
in fact, in the whole space case 
the condition $\int_{\R^N}g\,dx\neq 0$ 
is sometimes imposed to see the precise behavior 
of lifespan with respect to $\ep\ll1$. 
This poses that $\int_{\R^N}gU_{\R^N}\,dx$ 
with the positive harmonic function 
$U_{\R^N}=1$ is crucial for the lifespan estimates. 
\end{remark}

\begin{remark}
In Lai--Zhou \cite{LZ16}, to find the lifespan estimates 
they essentially assumed that the support of initial data 
is far away from boundary.
Theorem \ref{main} 
allows us to consider the initial data which 
have the support close to the boundary.
\end{remark}

\begin{remark}
Our technique is also applicable to 
the problem with nonlinearity $|\pa_t u|^p$
and the one of their weakly coupled system.
\end{remark}

The present paper 
is organized as follows. 
In Section \ref{prelim}, 
we construct 
special solutions of corresponding linear wave equation 
having slowly decaying property 
by separation of variables and the argument in Wakasa--Yordanov \cite{YWpre}. 
We also give their fundamental profiles in Section \ref{prelim}. 
Section \ref{mainproof}
is devoted to prove Theorem \ref{main} 
by using test function method based on 
the one in Ikeda--Sobajima--Wakasa \cite{IkSoWa_pre}.

\section{Preliminaries}\label{prelim}
First we consider a class of special solutions to the 
linear wave equation 
with Dirichlet boundary condition
\begin{equation}\label{eq:lin_wave}
\begin{cases}
\pa_t^2\Phi(x,t) -\Delta \Phi(x,t)=0,
&(x,t)\in \Omega_0\times (0,T),
\\
\Phi(x,t)=0,
&(x,t)\in \pa\Omega_0\times (0,T).
\end{cases}
\end{equation}
The aim of this section is to construct 
a positive solution of the linear wave equation in 
the space-time domain
\begin{equation}\label{Q}
\mathcal{Q}_1=\{(x,t)\in \Omega_0\times (0,t)\;;\;
|x|<t\}
\end{equation}
having polynomial decay of arbitrary order.

\subsection{Solutions of wave equation by separation of variables}

To begin with, 
we consider solutions by separation of variables of the form
\[
\Phi(x,t)=e^{-\lambda t}\varphi_\lambda(x), \quad (x,t)\in \Omega_0\times (0,\infty)
\] 
which has an exponential decay.
Then by \eqref{eq:lin_wave} this is equivalent to 
the following elliptic equation 
related to the eigenvalue problem of 
Laplace operator with Dirichlet boundary condition:
\begin{equation}\label{eq:elliptic}
\begin{cases}
\lambda^2 \varphi_\lambda(x)-\Delta \varphi_\lambda(x)=0,
&x\in \Omega_0,
\\
\varphi_\lambda(x)=0,
&x\in\pa\Omega_0,
\\
\varphi_\lambda(x)>0,
&x\in\Omega_0,
\end{cases}
\end{equation}
where $\lambda\geq 0$ is a parameter. Here we will construct 
a family $\{\varphi_\lambda\}_{\lambda>0}$ 
which is continuous with respect to $\lambda$ in a suitable sense. 
If $\lambda=0$, then $\varphi_0$ is nothing but a positive 
harmonic function on $\Omega_0$ satisfying the Dirichlet boundary condition, 
and therefore, we first fix 
\[
\varphi_0(x)=U(x)=1-|x|^{2-N}, \quad x\in \Omega_0.
\]
Then by using modified Bessel functions $I_\nu$ and $K_\nu$, 
we define the family of functions $\{\varphi_\lambda\}_{\lambda>0}$ as follows:
\begin{definition}
For $N\geq 3$ and $\lambda>0$, define 
\[
\varphi_\lambda(x):=
\psi_1(\lambda |x|)
-
\frac{I_\nu(\lambda)}{K_\nu(\lambda)}
\psi_2(\lambda |x|), 
\quad x\in \Omega_0
\]
where $\nu=\frac{N-2}{2}>0$ and  
\[
\psi_1(z)=2^{\nu}\Gamma(\nu+1)z^{-\nu}I_\nu(z),
\quad
\psi_2(z)=2^{\nu}\Gamma(\nu+1)z^{\nu}K_\nu(z)
\]
(for the detailed information about modified Bessel functions, see e.g., Beals--Wong \cite{BW}). 
\end{definition}
\begin{remark}
The function $\psi_1(|x|)$ can be represented by 
\[
\psi_1(|x|)=
\frac{1}{|S^{N-1}|}
\int_{S^{N-1}}e^{x\cdot \omega}\,d\omega
\]
which has been introduced in Yordanov--Zhang \cite{YZ06} 
and used many times in the previous papers listed in Section \ref{intro}. 
\end{remark}

To analyse the behavior of $\varphi_\lambda$, 
we use the precise behavior of $I_\nu$ and $K_\nu$ listed in the following lemma.
\begin{lemma}
Let $\mu>0$. Then $I_\mu$ and $K_\mu$ are smooth positive functions 
satisfying 
\begin{equation}\label{eq:mbe.ori}
z^2y''(z)+zy'(z)=\left(z^2+\mu^2\right)y(z), \quad z>0
\end{equation}
with the following properties
\begin{gather}
\label{eq:near0}
\lim_{z\downarrow 0}
\Big(z^{-\mu}I_\mu(z)\Big)=\frac{1}{2^\mu\Gamma(\mu+1)}, 
\quad
\lim_{z\downarrow 0}
\Big(z^{\mu}K_\nu(z)\Big)=2^{\mu-1}\Gamma(\mu),
\\
\label{eq:nearinf}
\lim_{z\to \infty}\Big(\frac{z^{\frac{1}{2}}}{e^z}I_\mu(z)\Big)=\frac{1}{\sqrt{2\pi}}, 
\quad
\lim_{z\to \infty}\Big(\frac{z^{\frac{1}{2}}}{e^z}K_\mu(z)\Big)=\sqrt{\frac{\pi}{2}},
\\
\label{eq:derivative}
\frac{d}{dz}\Big(z^{-\nu}I_\mu(z)\Big)=z^{-\mu}I_{\mu+1}(z),
\quad
\frac{d}{dz}\Big(z^{\nu}K_\mu(z)\Big)=-z^{-\mu}K_{\mu+1}(z).
\end{gather}
\end{lemma}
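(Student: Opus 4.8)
The plan is to recognise that every assertion in the lemma is a classical property of the modified Bessel functions, and to derive each from one of two complementary representations: the power series near the origin, and an integral representation controlling the behaviour at infinity. Throughout I take as the definition of $I_\mu$ the series
\[
I_\mu(z)=\sum_{k=0}^\infty\frac{1}{k!\,\Gamma(k+\mu+1)}\Big(\frac{z}{2}\Big)^{2k+\mu},
\qquad z>0,
\]
which, after removing the factor $z^\mu$, is entire in $z$ and converges uniformly on compact subsets of $(0,\infty)$, so that term-by-term differentiation is justified. For $K_\mu$ I use the representation
\[
K_\mu(z)=\int_0^\infty e^{-z\cosh t}\cosh(\mu t)\,dt,
\qquad z>0,
\]
from which positivity and smoothness in $z$ are immediate.

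The elementary parts I would dispose of first. To verify \eqref{eq:mbe.ori} for $I_\mu$, insert the series into the operator $z^2\frac{d^2}{dz^2}+z\frac{d}{dz}-(z^2+\mu^2)$; the coefficient of $z^{2k+\mu}$ reduces to $4k(k+\mu)a_k-a_{k-1}$ with $a_k=[k!\,\Gamma(k+\mu+1)2^{2k+\mu}]^{-1}$, and the recurrence $a_{k-1}=4k(k+\mu)a_k$ makes this vanish for every $k$; for $K_\mu$ one differentiates under the integral and integrates by parts in $t$. The near-origin limits \eqref{eq:near0} are then read off directly: the lowest-order term of the series gives $z^{-\mu}I_\mu(z)\to 2^{-\mu}/\Gamma(\mu+1)$, while for $K_\mu$ the connection formula $K_\mu=\frac{\pi}{2\sin(\mu\pi)}(I_{-\mu}-I_\mu)$ together with the reflection identity $\Gamma(\mu)\Gamma(1-\mu)=\pi/\sin(\mu\pi)$ yields $z^{\mu}K_\mu(z)\to 2^{\mu-1}\Gamma(\mu)$ (taking limits when $\mu$ is an integer). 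Finally the derivative identities \eqref{eq:derivative} are pure bookkeeping: differentiating the series for $z^{-\mu}I_\mu(z)$ term by term and reindexing $k\mapsto k-1$ reproduces $z^{-\mu}I_{\mu+1}(z)$, and the companion formula for $K_\mu$ follows from the analogous recurrence obtained by differentiating its integral representation.

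The genuine work is the large-$z$ asymptotics \eqref{eq:nearinf}, and this is where Laplace's method is the tool. Writing $\cosh t=1+\tfrac12 t^2+O(t^4)$ near $t=0$, one has $e^{-z\cosh t}=e^{-z}e^{-zt^2/2}(1+o(1))$, which concentrates the $K_\mu$ integral at the origin; the Gaussian evaluation $\int_0^\infty e^{-zt^2/2}\,dt=\sqrt{\pi/(2z)}$ then gives $K_\mu(z)\sim\sqrt{\pi/2}\,z^{-1/2}e^{-z}$. For $I_\mu$ I would start from Schläfli's representation
\[
I_\mu(z)=\frac{1}{\pi}\int_0^\pi e^{z\cos\theta}\cos(\mu\theta)\,d\theta-\frac{\sin(\mu\pi)}{\pi}\int_0^\infty e^{-z\cosh t-\mu t}\,dt,
\]
in which the second integral is exponentially small while the maximum of $\cos\theta$ at $\theta=0$ produces a Gaussian contribution $\sim e^{z}/\sqrt{2\pi z}$. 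The obstacle here is entirely technical---bounding the tails so that only the quadratic part of the phase contributes and checking that the prefactors collapse to the stated constants---but it requires no idea beyond a careful Laplace (Watson) estimate. Indeed, all the statements collected here are classical and could alternatively be quoted verbatim from Beals--Wong \cite{BW}.
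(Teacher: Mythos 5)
Your proposal is correct, and all of its ingredients (the power series for $I_\mu$, the integral representation $K_\mu(z)=\int_0^\infty e^{-z\cosh t}\cosh(\mu t)\,dt$, the connection formula, and the Laplace/Watson estimates for \eqref{eq:nearinf}) are the standard textbook derivations. The comparison with the paper is trivial here: the paper offers no proof of this lemma at all --- it simply records these classical properties of the modified Bessel functions and points the reader to Beals--Wong \cite{BW}, exactly as you note in your closing sentence. So your write-up supplies a self-contained argument where the paper delegates to the literature; the only thing either approach ``buys'' is that yours makes the lemma verifiable without the reference, at the cost of importing the connection formula $K_\mu=\tfrac{\pi}{2\sin(\mu\pi)}(I_{-\mu}-I_\mu)$ (itself classical) to handle the second limit in \eqref{eq:near0}. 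One small remark: you have silently corrected what are evidently typographical slips in the statement --- the $K_\nu$ in \eqref{eq:near0} should be $K_\mu$, and the weights $z^{-\nu}$, $z^{\nu}$ in \eqref{eq:derivative} should both be $z^{-\mu}$, since the identities you prove, $\frac{d}{dz}\bigl(z^{-\mu}I_\mu(z)\bigr)=z^{-\mu}I_{\mu+1}(z)$ and $\frac{d}{dz}\bigl(z^{-\mu}K_\mu(z)\bigr)=-z^{-\mu}K_{\mu+1}(z)$, are the ones actually used later (e.g.\ in the proof of Lemma \ref{lem:phi_lambda}); it would be worth saying so explicitly rather than fixing them tacitly.
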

Then we have
\begin{lemma}\label{lem:phi_lambda}
The family $\{\varphi_\lambda\}_{\lambda>0}$ has 
the following properties.
\begin{itemize}
\item[\bf (i)] for every $\lambda>0$, $\varphi_\lambda$ satisfies \eqref{eq:elliptic}.
\item[\bf (ii)] the map $(x,\lambda)\in \Omega_0\times(0,\infty)\mapsto \varphi_\lambda(x)$ is continuous.
\item[\bf (iii)] for every $x\in \Omega_0$, one has
\[
\lim_{\lambda\downarrow 0}\varphi_\lambda(x)=U(x).
\]
\item[\bf (iv)] 
for every $\lambda>0$, 
\[
\varphi_\lambda(x)\geq U(x)\psi_1(\lambda|x|), \quad x\in \Omega_0.
\]
\item[\bf (v)] 
there exists a constant $C_\nu>0$ such that 
for every $\lambda\in (0,1]$, 
\[
\varphi_\lambda(x)\leq C_\nu U(x)\psi_1(\lambda|x|), \quad x\in \Omega_0.
\]
\end{itemize}
\end{lemma}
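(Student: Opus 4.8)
The whole lemma reduces to a radial ODE, so throughout I would write $r=|x|$, $\nu=\frac{N-2}{2}$, $c_\nu=2^\nu\Gamma(\nu+1)$ and record the explicit form
\[
\varphi_\lambda(x)=\frac{c_\nu(\lambda r)^{-\nu}}{K_\nu(\lambda)}\big(I_\nu(\lambda r)K_\nu(\lambda)-I_\nu(\lambda)K_\nu(\lambda r)\big),\qquad r=|x|\ge 1 .
\]
For (i), the starting point is that a radial $v(r)=g(\lambda r)$ solves $\lambda^2 v-\Delta v=0$ if and only if $g''+\frac{N-1}{z}g'=g$; substituting $g(z)=z^{-\nu}y(z)$ converts this into the modified Bessel equation \eqref{eq:mbe.ori} with $\mu=\nu$ (where $2\nu+1=N-1$ is used), so both $\psi_1(\lambda r)$ and its $K_\nu$-built companion solve the elliptic equation in \eqref{eq:elliptic}. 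The coefficient $I_\nu(\lambda)/K_\nu(\lambda)$ is precisely the one annihilating the bracket at $r=1$, which is the Dirichlet condition. Continuity in (ii) is then immediate: $\varphi_\lambda(x)$ is assembled from the smooth functions $I_\nu,K_\nu$ on $(0,\infty)$ divided by the nonvanishing quantities $K_\nu(\lambda)$ and $(\lambda r)^\nu$.

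For positivity (the remaining part of (i)), set $G(r):=I_\nu(\lambda r)K_\nu(\lambda)-I_\nu(\lambda)K_\nu(\lambda r)$, so that $\varphi_\lambda=c_\nu(\lambda r)^{-\nu}G(r)/K_\nu(\lambda)$. Then $G(1)=0$ and, since $I_\nu$ is increasing and $K_\nu$ decreasing and positive, $G'(r)=\lambda I_\nu'(\lambda r)K_\nu(\lambda)-\lambda I_\nu(\lambda)K_\nu'(\lambda r)>0$; hence $G>0$ on $(1,\infty)$ and $\varphi_\lambda>0$. For (iii), fix $r>1$ and let $\lambda\downarrow 0$, so both arguments $\lambda,\lambda r\to 0$; feeding the small-argument asymptotics \eqref{eq:near0} into the displayed formula, the leading terms give $I_\nu(\lambda r)K_\nu(\lambda)\sim\frac{r^\nu}{2\nu}$ and $I_\nu(\lambda)K_\nu(\lambda r)\sim\frac{r^{-\nu}}{2\nu}$ (up to a common factor), while the prefactor $\sim 2\nu r^{-\nu}$, whose product is $1-r^{-2\nu}=U(x)$.

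For the lower bound (iv) I would argue by comparison in the operator $Lu=(r^{N-1}u')'-\lambda^2 r^{N-1}u$, for which the radial $\varphi_\lambda$ satisfies $L\varphi_\lambda=0$. Writing $P(r)=\psi_1(\lambda r)$ and $v=UP$, and using that $U$ is harmonic ($(r^{N-1}U')'=0$) while $LP=0$, a direct computation yields $Lv=2r^{N-1}U'(r)P'(r)$. Since $U'(r)=2\nu r^{-2\nu-1}>0$ and $P'(r)=\lambda c_\nu(\lambda r)^{-\nu}I_{\nu+1}(\lambda r)>0$ by \eqref{eq:derivative}, the function $v$ is a strict subsolution, so $w=\varphi_\lambda-v$ obeys $Lw<0$ with $w(1)=0$ and $w(r)\to+\infty$ as $r\to\infty$. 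A negative interior minimum of $w$ is impossible, for there $w'=0$, $w''\ge 0$, yet $Lw<0$ would force $w''<0$; hence $w\ge 0$, which is (iv) (and in particular reconfirms $\varphi_\lambda>0$).

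The upper bound (v) is the main obstacle, because the constant must be uniform in $\lambda\in(0,1]$. I would normalize by $\psi_1$, so that $\varphi_\lambda(x)/\psi_1(\lambda r)=1-\frac{I_\nu(\lambda)}{K_\nu(\lambda)}\frac{K_\nu(\lambda r)}{I_\nu(\lambda r)}$. Using the Wronskian $K_\nu(z)I_\nu'(z)-K_\nu'(z)I_\nu(z)=z^{-1}$ one gets $-\frac{d}{dr}\frac{K_\nu(\lambda r)}{I_\nu(\lambda r)}=\frac{1}{rI_\nu(\lambda r)^2}$, and since both sides vanish at $r=1$,
\[
\frac{\varphi_\lambda(x)}{\psi_1(\lambda r)}=\frac{I_\nu(\lambda)}{K_\nu(\lambda)}\int_1^r\frac{ds}{s\,I_\nu(\lambda s)^2},\qquad U(x)=2\nu\int_1^r s^{-2\nu-1}\,ds .
\]
Thus (v) reduces to the pointwise bound $\frac{I_\nu(\lambda)}{K_\nu(\lambda)}\frac{s^{2\nu}}{I_\nu(\lambda s)^2}\le 2\nu C_\nu$ for $s\ge 1$, $\lambda\in(0,1]$. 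Substituting $z=\lambda s\ge\lambda$, this quantity factors as $\frac{I_\nu(\lambda)}{\lambda^{2\nu}K_\nu(\lambda)}\cdot\frac{z^{2\nu}}{I_\nu(z)^2}$; the first factor is continuous on $(0,1]$ with a finite positive limit as $\lambda\downarrow 0$ (by \eqref{eq:near0}), hence bounded, and $z\mapsto z^{2\nu}/I_\nu(z)^2$ is bounded on $(0,\infty)$, with finite limit at $0$ by \eqref{eq:near0} and exponential decay at $\infty$ by \eqref{eq:nearinf}. The uniform control of these two elementary factors is exactly the crux of (v); everything else is the bookkeeping above.
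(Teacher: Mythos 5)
Your argument is correct, and for parts (i)--(iv) it is essentially the paper's proof written in radial form: the paper likewise identifies $(\psi_1,\psi_2)$ as a fundamental system of $\psi''+\frac{N-1}{r}\psi'=\psi$, obtains (iii) by factoring out $\psi_1(\lambda r)$ and applying \eqref{eq:near0}, and proves (iv) by showing that $\widetilde\varphi_\lambda=U\psi_1(\lambda|x|)-\varphi_\lambda$ satisfies $\lambda^2\widetilde\varphi_\lambda-\Delta\widetilde\varphi_\lambda\le 0$ (the cross term $2\nabla U\cdot\nabla\psi_1(\lambda|x|)\ge0$ is exactly your $2r^{N-1}U'P'$), vanishes on $\partial\Omega_0$, is nonpositive near infinity, and then invoking the maximum principle --- your operator $Lw=(r^{N-1}w')'-\lambda^2r^{N-1}w$ and the no-negative-interior-minimum argument is the same computation in one variable. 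Where you genuinely diverge is (v): the paper bounds $\partial\varphi_\lambda/\partial r$ by $Cr^{-2\nu-1}$ uniformly for $|x|\le2$ and $\lambda\in(0,1]$, integrates from the boundary to get $\varphi_\lambda\le CU\le CU\psi_1(\lambda|x|)$ there, and for $|x|\ge2$ drops the $K_\nu$ term and uses $U(x)\ge 1-2^{2-N}$; you instead use the Wronskian $K_\nu I_\nu'-K_\nu'I_\nu=1/z$ to get the exact representation $\varphi_\lambda/\psi_1(\lambda r)=\frac{I_\nu(\lambda)}{K_\nu(\lambda)}\int_1^r\frac{ds}{sI_\nu(\lambda s)^2}$ against $U=2\nu\int_1^rs^{-2\nu-1}\,ds$, reducing (v) to the boundedness of two explicit one-variable functions controlled by \eqref{eq:near0} and \eqref{eq:nearinf}. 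Your route avoids the case split at $|x|=2$, compares integrands pointwise, and is arguably cleaner; both give a constant depending only on $\nu$. One remark: your explicit formula for $\varphi_\lambda$ presumes $\psi_2(z)=2^\nu\Gamma(\nu+1)z^{-\nu}K_\nu(z)$, which is the reading forced by the radial ODE and the Dirichlet condition at $r=1$ (and is what the paper itself uses in its proof of (iii)), even though the displayed definition carries $z^{+\nu}$; it is worth stating that correction explicitly rather than silently.
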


\begin{proof}
The assertion {\bf (ii)} is obvious by the construction of $\psi_1$ and 
$\psi_2$. 
{\bf (i)} is also  verified 
because the pair $(\psi_1,\psi_2)$ is the fundamental 
system of the following ordinary differential equation:
\begin{equation*}
\psi''(r)+\frac{N-1}{r}\psi'(r)=\psi(r), \quad r>0
\end{equation*}
which is equivalent (via the change of functions 
$v(z)=z^{\frac{N-2}{2}}\psi(z)$) to the modified Bessel equation \eqref{eq:mbe.ori}
with the parameter $\mu=\nu=\frac{N-2}{2}$.
By using \eqref{eq:derivative} with $\varphi_\lambda\equiv 0$ on $\pa\Omega_0$, 
$\varphi_\lambda$ satisfies \eqref{eq:elliptic}.
For {\bf (iii)}, noting that 
\begin{align*}
\varphi_\lambda(x)=
2^\nu\Gamma(\nu+1)(\lambda r)^{-\nu}I_\nu(\lambda r)
\left(
1-
\frac{\lambda^{-\nu}I_\nu(\lambda)}{(\lambda r)^{-\nu}I_\nu(\lambda r)}
\cdot
\frac{(\lambda r)^{\nu}K_{\nu}(\lambda r)}{\lambda^{\nu}K_{\nu}(\lambda)}
\cdot r^{2-N}
\right)
\end{align*}
with the notation $r=|x|$, we have $\lim_{\lambda\downarrow 0}\varphi_\lambda(x)=1-r^{2-N}=U(x)$.
For {\bf (iv)}, we define 
\[
\widetilde{\varphi}_{\lambda}(x):=
U(x)\psi_1(\lambda|x|)-\varphi_\lambda(x), 
\quad x\in \overline{\Omega}_0
\]
with arbitrary fixed $\lambda>0$. 
Note that by (i) and \eqref{eq:derivative}, we have 
\begin{align*}
\Delta\widetilde{\varphi}_{\lambda}(x)
&=
(\Delta U)\psi_1(\lambda|x|)
+
2\nabla U\cdot\nabla\big(\psi_1(\lambda|x|)\big)
+
U\Delta (\psi_1(\lambda|x|))
-
\Delta \varphi_{\lambda}(x)
\\
&=
2\nabla U\cdot\nabla\big(\psi_1(\lambda|x|)\big)
+
\lambda^2U\psi_1(\lambda|x|)
-
\lambda^2\varphi_{\lambda}(x)
\\
&\geq
\lambda^2\widetilde{\varphi}_{\lambda}(x).
\end{align*}
Moreover, by \eqref{eq:nearinf} we see that for sufficiently large $R_\lambda$, 
\begin{align*}
\widetilde{\varphi}_{\lambda}(x)
&= (1-U(x))\psi_1(\lambda|x|)+\frac{I_\nu(\lambda)}{K_\nu(\lambda)}\psi_2(\lambda|x|)
\\
&=
-\lambda^{2\nu}\psi_1(\lambda|x|)
\left(
(\lambda |x|)^{-2\nu} 
-\frac{I_\nu(\lambda)}{K_\nu(\lambda)}
\frac{K_\nu(\lambda|x|)}{I_\nu(\lambda|x|)}
\right)\leq 0,
 \quad x\in \R^N\setminus B(0,R_\lambda).
\end{align*}
Therefore
we have
\begin{equation*}
\begin{cases}
\lambda^2 \widetilde{\varphi}_{\lambda}(x)
-\Delta\widetilde{\varphi}_{\lambda}(x)
\leq 0,
&x\in \Omega_0,
\\
\widetilde{\varphi}_{\lambda}(x)=0,
&x\in\pa\Omega_0,
\\
\widetilde{\varphi}_{\lambda}(x)\leq 0,
&\text{for }\ x\in\R^N\setminus B(0,R_\lambda).
\end{cases}
\end{equation*}
The maximum principle implies $\widetilde{\varphi}_{\lambda}\leq 0$ on $\Omega_0$
and hence 
$\varphi_{\lambda}(x)\geq U(x)\psi_1(\lambda|x|)$ $(x\in\Omega_0)$ is verified.
Finally we prove {\bf (v)}. We put 
\begin{gather*}
c_{1,\nu}=\inf_{z\in (0,2)}\Big(z^{-\nu}I_\nu(z)\Big)
\leq 
\sup_{z\in (0,2)}\Big(z^{-\nu}I_\nu(z)\Big)=C_{1,\nu}, 
\\
c_{2,\nu}=\inf_{z\in (0,2)}\Big(z^{\nu}K_\nu(z)\Big)
\leq 
\sup_{z\in (0,2)}\Big(z^{\nu}K_\nu(z)\Big)=C_{2,\nu}, 
\end{gather*}
which are all finite by \eqref{eq:near0}. 
By \eqref{eq:derivative} we see that for every $x\in \Omega_0$ and $\lambda>0
$,
\begin{align*}
\frac{\pa\varphi_\lambda}{\pa r}(x)
&
=
\left(
\lambda \psi_1'(\lambda r)
-
\frac{I_\nu(\lambda)}{K_\nu(\lambda)}
\lambda \psi_2'(\lambda r)
\right)
\\
&=
2^{\nu}\Gamma(\nu+1)
\left(
\lambda (\lambda r)^{-\nu}I_{\nu+1}(\lambda r)
+
\frac{I_\nu(\lambda)}{K_\nu(\lambda)}
\lambda (\lambda r)^{-\nu}K_{\nu+1}(\lambda r)
\right).
\end{align*}
If $|x|\leq 2$ and $\lambda\in (0,1]$, then 
\begin{align*}
\frac{\pa\varphi_\lambda}{\pa r}(x)
&\leq 
2^{\nu}\Gamma(\nu+1)
\left(
C_{1,\nu+1}\lambda^2r
+
\frac{C_{1,\nu}C_{2,\nu+1}}{c_{2,\nu}}
r^{-2\nu-1}
\right)
\\
&\leq 
2^{\nu}\Gamma(\nu+1)
\left(
4^{\nu+1}C_{1,\nu+1}
+
\frac{C_{1,\nu}C_{2,\nu+1}}{c_{2,\nu}}
\right)r^{-2\nu-1}.
\end{align*}
This with Dirichlet boundary condition yields that 
for $x\in \Omega_0\cap \overline{B(0,2)}$,
\[
\varphi_\lambda(x)\leq 
2^{\nu-1}\Gamma(\nu)
\left(
4^{\nu+1}C_{1,\nu+1}
+
\frac{C_{1,\nu}C_{2,\nu+1}}{c_{2,\nu}}
\right)U(x).
\]
If $|x|\geq 2$, by the definition of $\varphi_\lambda$ 
and the monotonicity of $U(x)$ (with respect to $r=|x|$), 
we see 
\[
\varphi_\lambda(x)
\leq 
\psi_1(\lambda|x|)
=
[U(x)]^{-1}U(x)\psi_1(\lambda|x|)
\leq 
(1-2^{2-N})^{-1}U(x)\psi_1(\lambda|x|)
\]
We obtain the desired upper bound for $\varphi_\lambda$.
\end{proof}
\subsection{Slowly decaying solutions of wave equation}
Next we construct a family of solutions having 
polynomial decay of arbitrary order. 

Before the construction of solutions to the problem 
with Dirichlet boundary condition \eqref{eq:lin_wave}, 
we consider the following formula 
describing the connection between the modified Bessel function $I_{\nu}(z)$
and the Gauss hypergeometric function $F(\cdot,\cdot,\cdot;z)$
in the ``light cone''
\[
\mathcal{Q}_0=\{(x,t)\in\R^N\times(0,\infty)\;;\;|x|<t\}.
\]
\begin{lemma}\label{lem:finite}
Let $\beta>0$. If $(x,t)\in \mathcal{Q}_0$, then
\[
\frac{1}{\Gamma(\beta)}
\int_0^{\infty} e^{-\lambda t}\psi_1(\lambda |x|)\lambda^{\beta-1}\,d\lambda
=
t^{-\beta}
F\left(\frac{\beta}{2},\frac{\beta+1}{2},\frac{N}{2};\frac{|x|^2}{t^2}\right),
\]
where $F(a,b,c;z)$ is the Gauss hypergeometric function defined as
\[
F(a,b,c;z)=\sum_{n=0}^\infty
\frac{(a)_n(b)_n}{(c)_n}\frac{z^n}{n!}, \quad |z|<1
\]
with the Pochhammer symbol $(d)_0=1$ and $(d)_n=\prod_{k=1}^{n}(d+k-1)$ for $n\in\N$.
\end{lemma}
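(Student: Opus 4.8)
The plan is to expand $\psi_1$ as a power series, integrate term by term, and recognize the resulting series as the Gauss hypergeometric function by means of the Legendre duplication formula. The hypothesis $(x,t)\in\mathcal{Q}_0$, i.e. $|x|<t$, should enter precisely as the condition guaranteeing both convergence of the integral and convergence of the hypergeometric series.

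First I would recall the series $I_\nu(z)=\sum_{k=0}^\infty \frac{1}{k!\,\Gamma(\nu+k+1)}(z/2)^{2k+\nu}$. Multiplying by $2^\nu\Gamma(\nu+1)z^{-\nu}$ and using $\nu+1=\frac{N}{2}$ together with $\Gamma(\nu+1)/\Gamma(\nu+k+1)=1/(N/2)_k$, this yields
\[
\psi_1(z)=\sum_{k=0}^\infty \frac{1}{(N/2)_k\,k!}\Big(\frac{z^2}{4}\Big)^k .
\]
The structural fact I would exploit is that every coefficient here is positive. Writing $r=|x|$ and substituting $\psi_1(\lambda r)$ into the integral, I interchange summation and integration; since each term of the series and $e^{-\lambda t}\lambda^{\beta-1}$ are nonnegative, Tonelli's theorem justifies the interchange, and it produces a finite value exactly when $t>r$. (This is the same threshold dictated by the asymptotics $I_\nu(z)\sim e^z/\sqrt{2\pi z}$, which make the integrand decay like $e^{-\lambda(t-r)}$.) Each factor is a Gamma integral, $\int_0^\infty e^{-\lambda t}\lambda^{2k+\beta-1}\,d\lambda=\Gamma(2k+\beta)\,t^{-2k-\beta}$, so that
\[
\frac{1}{\Gamma(\beta)}\int_0^\infty e^{-\lambda t}\psi_1(\lambda r)\lambda^{\beta-1}\,d\lambda
=t^{-\beta}\sum_{k=0}^\infty \frac{\Gamma(2k+\beta)}{\Gamma(\beta)}\,\frac{1}{(N/2)_k\,k!\,4^k}\,\frac{r^{2k}}{t^{2k}} .
\]

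The decisive step is rewriting the Gamma ratio. Applying the Legendre duplication formula $\Gamma(2z)=\pi^{-1/2}2^{2z-1}\Gamma(z)\Gamma(z+\tfrac12)$ with $z=\tfrac\beta2+k$ and with $z=\tfrac\beta2$ gives the Pochhammer identity
\[
\frac{\Gamma(2k+\beta)}{\Gamma(\beta)}=(\beta)_{2k}=4^k\Big(\frac\beta2\Big)_k\Big(\frac{\beta+1}{2}\Big)_k .
\]
The factor $4^k$ cancels the $4^{-k}$ inherited from $\psi_1$, and the surviving sum is exactly $\sum_{k}\frac{(\beta/2)_k((\beta+1)/2)_k}{(N/2)_k\,k!}(r^2/t^2)^k=F(\frac\beta2,\frac{\beta+1}{2},\frac N2;\frac{r^2}{t^2})$, whose convergence is guaranteed by $r^2/t^2<1$. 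This produces the claimed formula.

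The computation is mostly bookkeeping; the one genuinely content-bearing move is the duplication formula, which splits $(\beta)_{2k}$ into the two numerator Pochhammer symbols demanded by the hypergeometric series, and the clean cancellation of $4^k$ that makes the argument come out as $r^2/t^2$ rather than a rescaled variable. I expect the only point requiring analytic care to be the justification of the term-by-term integration, and there the hypothesis $|x|<t$ is exactly what is needed.
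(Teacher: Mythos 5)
Your proof is correct, but it takes a genuinely different route from the paper's. The paper does not expand anything in series: it observes that $v(x,t)=\int_0^\infty e^{-\lambda t}\psi_1(\lambda|x|)\lambda^{\beta-1}\,d\lambda$ is well defined and $C^2$ on $\mathcal{Q}_0$ (using $\psi_1(z)\le e^{z}$), that it solves the free wave equation because each $e^{-\lambda t}\psi_1(\lambda|x|)$ does, that it is homogeneous of degree $-\beta$ (i.e.\ $s^\beta v(sx,st)=v(x,t)$), and that $v(0,t)=\Gamma(\beta)t^{-\beta}$; it then invokes the classification of such self-similar radial solutions from \cite[Lemma 2.1]{IkedaSobajima2} to identify $v$ with $t^{-\beta}F(\frac{\beta}{2},\frac{\beta+1}{2},\frac{N}{2};\frac{|x|^2}{t^2})$. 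Your argument instead computes the integral directly: term-by-term integration of the series for $\psi_1$ (justified by Tonelli, since all terms are nonnegative), the Gamma integral, and the splitting $(\beta)_{2k}=4^k(\frac{\beta}{2})_k(\frac{\beta+1}{2})_k$ via the Legendre duplication formula. All of these steps check out, including the cancellation of $4^k$ and the identification $\nu+1=\frac{N}{2}$ so that the denominator Pochhammer symbol is $(\frac{N}{2})_k$. What your approach buys is self-containedness and transparency --- the hypergeometric parameters emerge mechanically rather than from an external uniqueness lemma; what the paper's approach buys is brevity and a conceptual explanation (self-similar solutions of the wave equation are hypergeometric in $|x|^2/t^2$) at the cost of depending on the cited result. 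One cosmetic caveat: your parenthetical claim that the interchange produces a finite value \emph{exactly} when $t>|x|$ is not quite right (for $\beta<\frac{N-1}{2}$ the integral also converges at $t=|x|$), but this plays no role since the lemma only concerns $(x,t)\in\mathcal{Q}_0$.
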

\begin{proof}
By the asymptotic profile of $I_\nu$, 
we have 
$\psi_1(x)\leq e^{|x|}$ 
and then 
\begin{align*}
e^{-\lambda t}\psi_1(\lambda |x|)\lambda^{\beta-1}
&\leq 
e^{-\lambda(t-|x|)}\lambda^{\beta-1}.
\end{align*}
This implies that if $|x|<t$, then the function 
\[
v(x,t)=
\int_0^{\infty} e^{-\lambda t}\psi_1(\lambda |x|)\lambda^{\beta-1}\,d\lambda
\]
is well-defined. By similar argument, we also have $v\in C^2(\mathcal{Q}_0)$.
Observe that for every $\lambda>0$, 
$v_\lambda=e^{-\lambda t}\psi_1(\lambda|x|)$
satisfies $\pa_t^2v_\lambda-\Delta v_\lambda=0$ in $\mathcal{Q}_0$.
Therefore $v$ also satisfies $\pa_t^2v-\Delta v=0$ on $\mathcal{Q}_0$. 
Moreover, we see from the change of variables $\mu=\lambda s$ that 
\begin{align*}
s^{\beta}v(sx,st)
&=
s^{\beta}\int_0^{\infty} e^{-\lambda st}\psi_1(\lambda |sx|)\lambda^{\beta-1}\,d\lambda
\\
&=
\int_0^{\infty} e^{-\mu t}\psi_1(\mu|x|)\mu^{\beta-1}\,d\mu
\\
&=
v(x,t).
\end{align*}
Noting that $v(0,t)=\Gamma(\beta)t^{-\beta}$, 
by \cite[Lemma 2.1]{IkedaSobajima2} (with $\mu'=0$ in their notation) we obtain the desired equality.
\end{proof}
Now we introduce the family of solutions to \eqref{eq:lin_wave}, 
which plays a crucial role in the present paper. 
\begin{definition}
For $\beta>0$, 
\[
\Phi_\beta(x,t)=
\frac{1}{\Gamma(\beta)}\int_0^{1} e^{-\lambda t}\varphi_\lambda(x)\lambda^{\beta-1}\,d\lambda, \quad (x,t)\in \mathcal{Q},
\]
where $\mathcal{Q}$ is as in \eqref{Q}.
Note that $\Phi_\beta$
is well-defined by virtue of Lemma \ref{lem:phi_lambda} {\bf (v)} and Lemma \ref{lem:finite}.
\end{definition}

The following lemma is mainly used in the proof of main result in this paper.

\begin{lemma}\label{lem:Phi_beta}
The functions $\{\Phi_\beta\}_{\beta> 0}$ satisfy the following 
properties:
\begin{itemize}
\item[\bf (i)]
for every $\beta>0$, $\Phi_\beta$ satisfies \eqref{eq:lin_wave} 
in $\mathcal{Q}$.
\item[\bf (ii)]
for every $\beta>0$, $\Phi_\beta$ satisfies $\pa_t\Phi_\beta=-\beta\Phi_{\beta+1}$ in $\mathcal{Q}$. 
\item[\bf (iii)](Upper bound)
the following inequality holds with the same constant $C_\nu$ as in Lemma \ref{lem:phi_lambda}:
\[
\Phi_\beta(x,t)\leq C_\nu U(x)t^{-\beta}
F\left(\frac{\beta}{2},\frac{\beta+1}{2},\frac{N}{2},\frac{|x|^2}{t^2}\right), 
\quad (x,t)\in \mathcal{Q}.
\]
\item[\bf (iv)](Lower bound) 
there exists a constant $C_\nu'>0$ such that for every 
$(x,t)\in \mathcal{Q}$ with $t\geq 1$,
\[
\Phi_\beta(x,t)\geq C_\nu 'U(x)t^{-\beta}.
\]
\item[\bf (v)](Large time behavior)
For every $x\in \Omega_0$, 
\begin{align*}
\lim_{t_0\to\infty}\Big(
t_0^{\beta}\Phi_\beta(x,t_0)
\Big)=U(x).
\end{align*}
\end{itemize}
\end{lemma}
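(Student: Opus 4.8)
The plan is to treat all five assertions as consequences of differentiating and estimating the defining integral under the integral sign, the legitimacy of which rests on the pointwise control of $\varphi_\lambda$ from Lemma \ref{lem:phi_lambda} together with the elementary bound $\psi_1(z)\le e^z$. The fundamental observation is that for each fixed $\lambda\in(0,1)$ the function $v_\lambda(x,t)=e^{-\lambda t}\varphi_\lambda(x)$ solves \eqref{eq:lin_wave} classically: since $\varphi_\lambda$ satisfies \eqref{eq:elliptic}, one has $\pa_t^2 v_\lambda=\lambda^2 v_\lambda=e^{-\lambda t}\Delta\varphi_\lambda=\Delta v_\lambda$, and $v_\lambda=0$ on $\pa\Omega_0$. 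Moreover, combining Lemma \ref{lem:phi_lambda}{\bf (v)} with $\psi_1(z)\le e^z$ gives, for $\lambda\in(0,1]$ and $(x,t)\in\mathcal{Q}$,
\[
0<e^{-\lambda t}\varphi_\lambda(x)\le C_\nu U(x)\,e^{-\lambda(t-|x|)},
\]
and the same bound with an extra factor $\lambda^2$ controls $\pa_t^2 v_\lambda=\Delta v_\lambda$. On any compact subset of $\mathcal{Q}$ one has $t-|x|\ge\delta>0$, so these bounds are integrable against $\lambda^{\beta-1}$ uniformly, which is the engine for all the interchanges below.

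For {\bf (i)} and {\bf (ii)} I would differentiate under the integral sign. Property {\bf (ii)} is immediate: bringing $\pa_t$ inside produces a factor $-\lambda$, and using $\Gamma(\beta+1)=\beta\Gamma(\beta)$ rewrites the result as $-\beta\Phi_{\beta+1}$. For {\bf (i)}, the $t$-derivatives are harmless (they only raise the power of $\lambda$), so the real work is to justify $\Delta\Phi_\beta=\Gamma(\beta)^{-1}\int_0^1 \Delta v_\lambda\,\lambda^{\beta-1}\,d\lambda$; once this interchange is granted, $\pa_t^2\Phi_\beta-\Delta\Phi_\beta=\Gamma(\beta)^{-1}\int_0^1(\pa_t^2 v_\lambda-\Delta v_\lambda)\lambda^{\beta-1}\,d\lambda=0$, and $\Phi_\beta=0$ on $\pa\Omega_0$ follows from $\varphi_\lambda|_{\pa\Omega_0}=0$.

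The comparison estimates {\bf (iii)} and {\bf (iv)} are then quick. For the upper bound, Lemma \ref{lem:phi_lambda}{\bf (v)} gives $\varphi_\lambda(x)\le C_\nu U(x)\psi_1(\lambda|x|)$, and since the integrand is nonnegative one may enlarge the range of integration from $(0,1)$ to $(0,\infty)$; Lemma \ref{lem:finite} then identifies the resulting integral with the hypergeometric expression, yielding {\bf (iii)}. For the lower bound I would use Lemma \ref{lem:phi_lambda}{\bf (iv)} together with $\psi_1\ge\psi_1(0)=1$ (monotonicity of $\psi_1$ via \eqref{eq:derivative}) to get $\varphi_\lambda(x)\ge U(x)$, whence for $t\ge1$,
\[
\Phi_\beta(x,t)\ge\frac{U(x)}{\Gamma(\beta)}\int_0^1 e^{-\lambda t}\lambda^{\beta-1}\,d\lambda
=\frac{U(x)}{\Gamma(\beta)}\,t^{-\beta}\int_0^t e^{-\mu}\mu^{\beta-1}\,d\mu
\ge C_\nu'U(x)t^{-\beta},
\]
with $C_\nu'=\Gamma(\beta)^{-1}\int_0^1 e^{-\mu}\mu^{\beta-1}\,d\mu$.

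Finally, for {\bf (v)} I would rescale by setting $\mu=\lambda t_0$, which turns $t_0^\beta\Phi_\beta(x,t_0)$ into $\Gamma(\beta)^{-1}\int_0^{t_0}e^{-\mu}\varphi_{\mu/t_0}(x)\mu^{\beta-1}\,d\mu$. For fixed $\mu$ one has $\varphi_{\mu/t_0}(x)\to U(x)$ by Lemma \ref{lem:phi_lambda}{\bf (iii)}, and for $t_0\ge 2|x|$ the displayed bound gives $e^{-\mu}\varphi_{\mu/t_0}(x)\le C_\nu U(x)e^{-\mu/2}$, an integrable dominating function; dominated convergence then produces $U(x)\Gamma(\beta)^{-1}\int_0^\infty e^{-\mu}\mu^{\beta-1}\,d\mu=U(x)$. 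I expect the main obstacle to be the rigorous justification of the interchange in {\bf (i)}: controlling the Laplacian reduces via \eqref{eq:elliptic} to controlling $\varphi_\lambda$ itself, but interchanging two spatial derivatives with the integral requires uniform bounds on $\pa_r\varphi_\lambda$ and $\pa_r^2\varphi_\lambda$ on compact subsets of $\mathcal{Q}$, which one obtains from the explicit Bessel representation (as in the bound on $\pa_r\varphi_\lambda$ already derived in the proof of Lemma \ref{lem:phi_lambda}{\bf (v)}) together with the radial ODE $\varphi_\lambda''+\tfrac{N-1}{r}\varphi_\lambda'=\lambda^2\varphi_\lambda$.
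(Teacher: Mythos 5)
Your proposal is correct and follows essentially the same route as the paper: differentiation under the integral sign for \textbf{(i)}--\textbf{(ii)}, the comparison bounds of Lemma \ref{lem:phi_lambda} \textbf{(iv)}--\textbf{(v)} combined with Lemma \ref{lem:finite} and the change of variables $\mu=\lambda t$ for \textbf{(iii)}--\textbf{(iv)}, and rescaling plus dominated convergence for \textbf{(v)}. The only difference is cosmetic: you spell out the justification of interchanging $\Delta$ with the $\lambda$-integral, which the paper leaves implicit.
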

\begin{proof}
{\bf (i)} 
Since for every $\lambda>0$, 
$e^{-\lambda t}\varphi_\lambda(x)$
satisfies \eqref{eq:lin_wave}, 
$\Phi_\beta$ is also the solution of the same problem. 
{\bf (ii)} By direct computation, we have
for every $(x,t)\in \mathcal{Q}$,
\begin{align*}
\pa_t\Phi_\beta(x,t)
&=
\frac{1}{\Gamma(\beta)}
\frac{\pa }{\pa t}
\left(
\int_0^{1} e^{-\lambda t}\varphi_\lambda(x)\lambda^{\beta-1}\,d\lambda
\right)
\\
&=
-
\frac{\beta}{\Gamma(\beta+1)}
\int_0^{1} e^{-\lambda t}\varphi_\lambda(x)\lambda^{(\beta+1)-1}\,d\lambda
\\
&=
-\beta\Phi_{\beta+1}(x,t).
\end{align*} 
{\bf (iii)}
Using Lemma \ref{lem:phi_lambda} (v), we deduce that 
for every $(x,t)\in \mathcal{Q}$,
\begin{align*}
\Phi_\beta(x,t)
&\leq \frac{C_\nu}{\Gamma(\beta)}
\int_0^{1} e^{-\lambda t}\Big(U(x)\psi_1(\lambda |x|)\Big)\lambda^{\beta-1}\,d\lambda
\\
&\leq \frac{C_\nu U(x)}{\Gamma(\beta)}
\int_0^{\infty} e^{-\lambda t}\psi_1(\lambda |x|)\lambda^{\beta-1}\,d\lambda
\\
&= C_\nu U(x)t^{-\beta} 
F\left(\frac{\beta}{2},\frac{\beta+1}{2},\frac{N}{2};\frac{|x|^2}{t^2}\right).
\end{align*}
{\bf (iv)}
Employing Lemma \ref{lem:phi_lambda} (iv)
with $\psi_1(z)\geq 1$ implies
that for every $(x,t)\in \mathcal{Q}$ with $t\geq 1$,
\begin{align*}
\Phi_\beta(x,t)
&\geq 
\frac{1}{\Gamma(\beta)}
\int_0^{1} e^{-\lambda t}U(x)\lambda^{\beta-1}\,d\lambda
\\
&= 
\frac{U(x)}{\Gamma(\beta)}
t^{-\beta}\int_0^{t} e^{-\mu}\mu^{\beta-1}\,d\mu
\\
&\geq 
\left(
\frac{1}{\Gamma(\beta)}
\int_0^{1} e^{-\mu}\mu^{\beta-1}\,d\mu
\right)
U(x)t^{-\beta}.
\end{align*}
{\bf (v)}
For $t_0>\max\{1,2|x|\}$, 
employing change of variables $\mu=\lambda t_0$ gives
\begin{align*}
t_0^{\beta}\Phi_\beta(x,t_0)
&=
\frac{t_0^{\beta}}{\Gamma(\beta)}
\int_0^1 e^{-\lambda t_0}\varphi_\lambda(x)\lambda^{\beta-1}\,d\lambda
\\
&=
\frac{1}{\Gamma(\beta)}
\int_0^{t_0} 
e^{-\mu}\varphi_{\mu t_0^{-1}}(x)\mu^{\beta-1}\,d\mu 
\\
&=
\frac{1}{\Gamma(\beta)}
\int_0^{\infty} 
\chi_{(0,t_0)}(\mu)e^{-\mu}\varphi_{\mu t_0^{-1}}(x)\mu^{\beta-1}\,d\mu,
\end{align*}
where $\chi_I$ denotes the indicator function on the interval $I$. 
Noting that $\varphi_{\mu t_0^{-1}}(x)\to U(x)$ as $t_0\to \infty$ 
(Lemma \ref{lem:phi_lambda} (iii))
and the consequence of Lemma \ref{lem:phi_lambda} (iv) as
\begin{align*}
\chi_{(0,t_0)}(\mu)e^{-\mu}\varphi_{\mu t_0^{-1}}(x)
&\leq 
e^{-\mu}U(x)\psi_1(\mu t_0^{-1}x)
\leq U(x)e^{-\mu+\frac{\mu|x|}{t_0}}
\leq U(x)e^{-\frac{\mu}{2}},
\end{align*}
from the dominated convergence theorem 
we obtain the desired convergence. 
\end{proof}
\section{Proof of blowup with lifespan estimates}\label{mainproof}

Here we give a proof of 
blowup phenomena with 
the sharp upper bound of lifespan estimates 
via the similar strategy as in \cite{IkSoWa_pre}. 
The difference of that is to use the 
special solutions satisfying Dirichlet boundary condition. 
For simplicity, 
we use $\Omega=\Omega_0$ in this section.
\begin{proof}[Proof of Theorem \ref{main}]
First we observe that for smooth function $\Psi$ on $\supp u$
satisfying Dirichlet boundary condition on $\pa\Omega$, 
we see by multiplying $\Psi$ to the equation in\ \eqref{NW}
and by using integration by parts that
\begin{align}\label{eq:TFM}
\int_{\Omega}|u|^p\Psi\,dx
=
\frac{d}{dt}
\int_{\Omega}(\pa_tu\Psi-u\pa_t\Psi)\,dx
+
\int_{\Omega}u(\pa_t^2\Psi-\Delta \Psi)\,dx.
\end{align}
We frequently use 
a cut-off function 
$\eta\in C^\infty([0,\infty);[0,1])$ satisfying 
$\eta(s)=1$ for $s\in [0,\frac{1}{2}]$ 
and
$\eta(s)=0$ for $s\in [1,\infty)$ 
with $\eta'(s)\leq 0$.
Also we assume without loss of generality that 
$T_\ep>1$ (otherwise the solution blows up until $t=1$).
\\
{\bf (Subcritical case $1<p<p_S(N)$)}\ 
Taking $\Psi=\eta_R(t)^{2p'}U(x)$ with $\eta_R(t)=\eta(t/R)$ in \eqref{eq:TFM}, we have
\begin{align*}
\int_\Omega
|u|^p  \eta_R^{2p'}
U\,dx
&=
\frac{d}{dt}
\int_\Omega
\Big(
\pa_tu \eta_R^{2p'}U
-
u U\pa_t[\eta_R^{2p'}]
\Big)\,dx
\\
&\quad
+
\int_\Omega
u\left(\pa_t^2 [\eta_R^{2p'}]U- \eta_R^{2p'}\Delta U\right)
\,dx
\\
&\leq
\frac{d}{dt}
\int_\Omega
\Big(
\pa_tu \eta_R^{2p'}U
-
u U\pa_t[\eta_R^{2p'}]
\Big)\,dx
\\
&\quad
+
CR^{-2}\int_\Omega
|u|[\eta_R^{2p'}]^{\frac{1}{p}}U
\,dx.
\end{align*}
Taking $R\in (1,T_\ep)$ and integrating it over $[0,T_\ep]$, we deduce
\begin{align*}
&\ep \int_\Omega gU\,dx
+
\int_0^{T_\ep}\!\!\int_\Omega
|u|^p  \eta_R^{2p'}
U\,dx\,dt
\\
&
\leq 
CR^{-2}
\int_0^{T_\ep}\!\!\int_\Omega
[|u|^p\eta_R^{2p'}]^{\frac{1}{p}}U
\,dx\,dt
\\&\leq 
\frac{C^{p'}}{p'}R^{-2p'}
\int_0^R\!\!\int_{\Omega(t)}
U
\,dx\,dt
+
\frac{1}{p}
\int_0^{T_\ep}\!\!\int_\Omega
|u|^p\eta_R^{2p'}U
\,dx\,dt, 
\end{align*}
where $\Omega(t)=\Omega\cap B(0,R_0+t)$. 
Noting that $U(x)\leq 1$ and $|\Omega(t)|\leq C(R_0+t)^{N}\leq C'R^{N}$, we have 
\begin{equation}\label{eq:volume}
p'\ep 
\int_\Omega gU\,dx
+
\int_0^{T_\ep}\!\!\int_\Omega
|u|^p\eta_R^{2p'}U
\,dx\,dt
\leq CR^{N-1-\frac{2}{p-1}}.
\end{equation}

Next we note by \eqref{eq:ass.pos} that 
\[
\int_\Omega g(x)\Big(t_0^{\beta}\Phi_{\beta}(x,t_0)\Big)\,dx
+
\beta \int_\Omega f(x)\Big(t_0^{\beta}\Phi_{\beta+1}(x,t_0)\Big)
\,dx\to 
\int_\Omega gU\,dx>0
\]
as $t_0\to \infty$. Therefore there exists $t_\beta> R_0$ such that 
\[
I_\beta=
\int_\Omega g(x)\widetilde{\Phi}_{\beta}(x,0)\,dx
+
\frac{\beta}{t_\beta}\int_\Omega f(x)\widetilde{\Phi}_{\beta+1}(x,0)
\,dx
\geq \frac{1}{2}\int gU\,dx
\]
with $\widetilde{\Phi}_{\beta}(x,t)=t_\beta^{\beta}\Phi_{\beta}(x,t_\beta+t)$.
Now we take $\Psi(x,t)=\eta_R^{2p'}\widetilde{\Phi}_{\beta}$
in \eqref{eq:TFM}. Then  
\begin{align*}
&\int_\Omega |u|^p\eta_R^{2p'}\widetilde{\Phi}_{\beta}(t)\,dx
\\
&=
\frac{d}{dt}
\int_\Omega 
\left(
\pa_tu \eta_R^{2p'}\widetilde{\Phi}_{\beta}(t)
+\frac{\beta}{t_\beta}u \eta_R^{2p'}\widetilde{\Phi}_{\beta+1}(t)
-2p'u \eta_R^{2p'-1}\eta_R'\widetilde{\Phi}_{\beta}(t)
\right)\,dx
\\
&\quad
+
2p'\int_\Omega u[\eta_R^*]^{\frac{2p'}{p}}\Big([(2p'-1)(\eta_R')^2+\eta_R''\eta_R]\widetilde{\Phi}_{\beta}(t)
+
\frac{2(N-1)}{t_\beta}\eta_R'\eta_R\widetilde{\Phi}_{\beta+1}(t)\Big)\,dx.
\end{align*}
Here we have introduced $\eta_R^*(t)=\eta^*(t/R)$ with 
$\eta^*(s)=\chi_{[\frac{1}{2},\infty)}\eta(s)$ ($\chi_I$ is the indicator function on the interval $I$). 
Integrating it over $[0,T_\ep]$, we see
\begin{align}
\label{eq:TFM_beta}
I_{\beta}
+
\int_0^{T_\ep}\!\!\int_\Omega |u|^p\eta_R^{2p'}\widetilde{\Phi}_{\beta}(t)\,dx\,dt
\leq 
C
\int_0^{T_\ep}\!\!\int_\Omega[|u|^p(\eta_R^*)^{2p'}]^{\frac{1}{p}}
\left(\frac{\widetilde{\Phi}_\beta}{R^2}+\frac{\widetilde{\Phi}_{\beta+1}}{t_\beta R}\right)\,dx\,dt.
\end{align}
Putting $\beta=N-1$ and using Lemma \ref{lem:Phi_beta} (iii)
with the formula $F(a,b,a;z)=F(b,a,a;z)=(1-z)^{-b}$, we have
\begin{align*}
\frac{\widetilde{\Phi}_{N-1}}{R^2}+\frac{\widetilde{\Phi}_{N}}{t_\beta R}
\leq 
CR^{-2-\beta}
\left(1-\frac{|x|^2}{t_\beta+t}\right)^{-\frac{N+1}{2}}U(x), \quad x\in {\rm supp}\,u(t), \ t\in (R/2,R).
\end{align*}
Computing the second integral on the right hand wide of the above inequality
with H\"older's inequality, 
we deduce
\begin{equation}\label{eq:concent}
\delta\left(
\frac{\ep}{2}
\int_\Omega gU\,dx
\right)^pR^{N-\frac{N-1}{2}p}\leq 
\int_0^T\!\!\int_\Omega|u|^p(\eta_R^*)^{2p'}U\,dx\,dt.
\end{equation}
Combining \eqref{eq:concent} with \eqref{eq:volume}, we obtain
\[
\delta\left(
\frac{\ep}{2}
\int_\Omega gU\,dx
\right)^p\leq CR^{-\frac{\gamma(N,p)}{2(p-1)}}
\]
which implies the desired upper bound for lifespan of $u$ when $1<p<p_S(N)$. 
\\
{\bf (Critical case $p=p_S(N)$)}\ In view of \eqref{eq:concent} together with Lemma \ref{lem:Phi_beta} (iv), we have 
\begin{equation}\label{eq:concent_crit}
\tilde{\delta}\left(
\frac{\ep}{2}
\int_\Omega gU\,dx
\right)^p\leq 
\int_0^{T_\ep}\!\!\int_\Omega|u|^p(\eta_R^*)^{2p'}\widetilde{\Phi}_{\beta_p}\,dx\,dt,
\end{equation}
where $\beta_p=\frac{N-1}{2}-\frac{1}{p}=N-\frac{N-1}{2}p>0$ by the condition $p=p_S(N)$. 
Take $\beta=\beta_p$ in \eqref{eq:TFM_beta}. 
Then 
\begin{align*}
&\frac{\ep}{2}\int_\Omega gU\,dx
+
\int_0^{T_\ep}\!\!\int_\Omega |u|^p\eta_R^{2p'}\widetilde{\Phi}_{\beta}\,dx\,dt
\\
&\leq 
C
\left(
\int_{\frac{R}{2}}^R\!\!\int_{\Omega(t)}
\left(\frac{1}{R^2}+\frac{\widetilde{\Phi}_{\beta+1}}{t_\beta R\widetilde{\Phi}_\beta}\right)^{p'}\widetilde{\Phi}_\beta\,dx\,dt
\right)^{\frac{1}{p'}}
\left(
\int_0^{T_\ep}\!\!\int_\Omega |u|^p(\eta_R^*)^{2p'}\widetilde{\Phi}_\beta\,dx\,dt
\right)^{\frac{1}{p}}.
\end{align*}
Applying Lemma \ref{lem:Phi_beta} (iii) and (iv), we obtain
\begin{align}
\label{eq:volume_crit}
&
\int_0^{T_\ep}\!\!\int_\Omega |u|^p\eta_R^{2p'}\widetilde{\Phi}_{\beta}\,dx\,dt
\leq 
C(\log R)^{\frac{1}{p'}}
\left(
\int_0^{T_\ep}\!\!\int_\Omega |u|^p(\eta_R^*)^{2p'}\widetilde{\Phi}_\beta\,dx\,dt
\right)^{\frac{1}{p}}.
\end{align}
By introducing the function 
\[
Y(R)=
\int_0^R
\left(\int_0^{T_\ep}\!\!\int_\Omega |u|^p(\eta_\rho^*)^{2p'}\widetilde{\Phi}_\beta\,dx\,dt\right)\rho^{-1}\,d\rho
\]
(as in \cite[Lemma 3.9]{IkSoWa_pre}), the inequalities \eqref{eq:concent_crit} and \eqref{eq:volume_crit} can be translated into 
\begin{align*}
\begin{cases}
\tilde{\delta}\left(
\dfrac{\ep}{2}
\displaystyle\int_\Omega gU\,dx
\right)^p\leq RY'(R), 
\\
Y(R)^p\leq (\log R)^{p-1}RY'(R)
\end{cases}
\end{align*}
for every $R\in (1,T_\ep)$. Employing \cite[Lemma 2.10]{IkSoWa_pre}, we obtain
\[
T\leq \exp\left(C\left(
\dfrac{\ep}{2}
\displaystyle\int_\Omega gU\,dx
\right)^{-p(p-1)}\right). 
\]
This gives the desired upper bound of the lifespan of $u$ in the critical case $p=p_S(N)$.
The proof is complete.
\end{proof}



\end{document}